\documentclass[reqno,12pt]{amsart}
\usepackage[T1]{fontenc}
\usepackage[utf8]{inputenc}
\usepackage{dsfont}
\usepackage{todonotes}
\usepackage{bm}
\usepackage{bbm}
\usepackage[nodate]{datetime}
\usepackage[hmargin=28mm,top=30mm,bottom=32mm,headheight=14pt,a4paper]{geometry}
\usepackage{color}
\usepackage{fancyhdr}
\usepackage{amsmath,amssymb,amsfonts,amsthm}
\usepackage{enumerate}
\usepackage[hidelinks]{hyperref}

\allowdisplaybreaks

\newtheorem{theorem}{Theorem}[section]
\newtheorem*{theorem*}{Theorem}
\newtheorem{lemma}[theorem]{Lemma}
\newtheorem{proposition}[theorem]{Proposition}
\newtheorem{corollary}[theorem]{Corollary}

\theoremstyle{remark}

\newtheorem{remark}[theorem]{Remark}

\newcommand{\EE}{\mathbb{E}}

\begin{document}
\title{Quantitative linear independence for square roots}
\author[M. Aymone, S. Figueredo, S. Iyer and C. T\'afula]{Marco Aymone, Samuel Figueredo, Siddharth Iyer and Christian T\'afula}

\begin{abstract}
We consider the problem of finding lower bounds for integer linear combinations of $\sqrt{a_1},\ldots,\sqrt{a_K}$, where $a_1,\ldots,a_K$ are positive integers such that their square roots are linearly independent over the rationals. We use a probabilistic approach and prove that for $K\geq 8$ and nonzero integers $m_1,\ldots,m_K$,
$$ \bigg|\sum_{n\leq K} m_n\sqrt{a_n}\bigg| > e^{\frac{2^{K-1}-1}{K} - \frac{1}{2}} \bigg(\max_{n\leq K}|m_n|\sqrt{a_n}\cdot \sqrt{K}\bigg)^{-(2^{K-1}-1)}. $$
This inequality improves the dependence on $K$ in the classical product bound.
\end{abstract}

\maketitle

\section{Introduction.}
 Let $K\geq 2$ and let $a_1,a_2,\ldots,a_K$ be positive integers such that their square-free parts are all distinct.\footnote{Every positive integer $a$ can be decomposed as $a=c^2d$, where $d$ is square-free. We say that $d$ is the square-free part of $a$.} Besicovitch \cite{besicovitch_LI} proved that their square roots are linearly independent over $\mathbb{Q}$. Our objective in this paper is to obtain quantitative bounds on this linear independence, that is, for integers $m_1,\ldots,m_K$, not all zero, we wish to prove lower bounds for 
\begin{equation}\label{equation Lambda}
\Lambda:=\bigg|\sum_{n\leq K}m_n\sqrt{a_n}\bigg|.    
\end{equation}

This question belongs to a class of difficult problems in numerical analysis and computational complexity where, for example, one wishes to obtain lower bounds for
$$\bigg|\sum_{n\leq M}\sqrt{a_n}-\sum_{M<n\leq K}\sqrt{a_n}\bigg|.$$
For a nice literature background on the computational and number-theoretic motivations, see Dubickas \cite{Dubickas_square_roots}, Steinerberger \cite{steinerberger_square_roots}, and the references therein.

For fixed $a_1,\ldots,a_K$, Eisenbrand, Haeberle and Singer \cite{eisenbrand_haeberle_singer_square_roots} proved a lower bound of the form
$$ \Lambda\geq \gamma(a_1,\ldots,a_K) \bigg(\max_{n\leq K}|m_n| \cdot K\bigg)^{-2K},$$
where $\gamma(a_1,\ldots,a_K)>0$ is an ineffective constant depending on the $a_n$. Here, we are interested in an explicit bound that is uniform in both the $a_n$ and the coefficients.

The classical effective lower bound for the linear combination $\Lambda$, due to Burnikel, Fleischer, Mehlhorn and Schirra \cite{burnikel_et_al_square_roots}, states that
\begin{equation}\label{equation Lambda Burnikel}
\Lambda\geq \bigg(\max_{n\leq K}|m_n|\sqrt{a_n}\cdot K\bigg)^{-(2^{K}-1)}.    
\end{equation}
Their proof uses the fact that, for $\varepsilon=(\varepsilon_1,\ldots,\varepsilon_K)\in\{-1,1\}^K$
$$P := \prod_{\varepsilon\in\{-1,1\}^K}\bigg(\sum_{n\leq K}\varepsilon_n m_n\sqrt{a_n}\bigg)$$
is a non-vanishing integer; see Lemma \ref{lemma X} below.

We re-examine this argument. Applying the AM-GM inequality to $P^2$, and then interpreting $(\varepsilon_n)_{n\leq K}$ as i.i.d. Rademacher random variables, orthogonality gives
$$ \EE\bigg|\sum_{n\leq K}\varepsilon_n m_n\sqrt{a_n}\bigg|^2 = \sum_{n\leq K}m_n^2 a_n \leq K\bigg(\max_{n\leq K}|m_n|\sqrt{a_n}\bigg)^2. $$

This reasoning already gives a saving in the power of $K$ in comparison with \eqref{equation Lambda Burnikel} (see Proposition \ref{old thm}), and suggests that further improvements can be made by a finer analysis of the discrepancy between the first and second moments of Rademacher sums. On the one hand, Cauchy--Schwarz applied to Rademacher sums can not be improved for general weights, since one single weight can concentrate all mass of the sum. On the other hand, when the weights of these Rademacher sums have comparable size, then a saving can be obtained -- here we refer to these recent papers \cite{gao_qian} by Gao and Qian, and to \cite{Jakimiuk_et_al} by Jakimiuk, Tang and Tkocz and the references therein for the literature on this field of research.

Here in our paper, we can divide our problem of finding lower bounds for $\Lambda$ into two cases: one is that $\Lambda$ is distant from $0$, and hence that the proof should be straightforward if this distance is relatively large, and the other in which $\Lambda$ is close to 0. In the second and more interesting case, $\Lambda$ being small means heuristically that the weights $m_n\sqrt{a_n}$ are not too sparse, and hence that a saving can be obtained in a Cauchy--Schwarz argument for Rademacher sums akin \cite{gao_qian} and \cite{Jakimiuk_et_al}. Indeed, by inserting the fourth and sixth moments on this approach, we obtain the following further improvement upon the $K$-dependence of \eqref{equation Lambda Burnikel}.

\begin{theorem}\label{theorem principal}
Let $K\geq 8$ and let $a_1,a_2,\ldots,a_K$ be positive integers such that their square-free parts are all distinct. Let $m_1,\ldots,m_K$ be nonzero integers, and let $\Lambda$ be as in \eqref{equation Lambda}. Then, we have that
\begin{equation}\label{equation main bound}
\Lambda\geq e^{(2^{K-1}-1)/K} \bigg(\frac{1}{1-2^{1-K}}\sum_{n\leq K} m_n^2a_n\bigg)^{-(2^{K-1}-1)/2}.
\end{equation}
In particular,
$$ \Lambda\geq C_K\, e^{(2^{K-1}-1)/K} \bigg(\max_{n\leq K}|m_n|\sqrt{a_n}\cdot \sqrt{K}\bigg)^{-(2^{K-1}-1)}, $$
where $C_K := (1-2^{1-K})^{(2^{K-1}-1)/2}>e^{-1/2}$. Moreover, $C_K=e^{-1/2}+O(2^{-K})$.
\end{theorem}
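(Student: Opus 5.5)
The plan is to sharpen the product argument sketched in the introduction: replace AM--GM by a quantitative form of Jensen's inequality for $\log$, and control the gain with the second, fourth and sixth moments of Rademacher sums. Write $w_n:=|m_n|\sqrt{a_n}$, $\sigma^2:=\sum_{n\le K}m_n^2a_n$, $S_\varepsilon:=\sum_n\varepsilon_n m_n\sqrt{a_n}$ and $N:=2^{K-1}-1$.

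\textbf{Step 1: reduction to a product bound.} By Lemma \ref{lemma X}, $P$ is a nonzero integer. Since $S_{-\varepsilon}=-S_\varepsilon$, we have $|P|=\prod_{\varepsilon\in\mathcal{E}}S_\varepsilon^2$, where $\mathcal{E}$ is a set of representatives of $\{-1,1\}^K/\pm$ containing $\varepsilon=(1,\dots,1)$. Hence
$$\Lambda^2\prod_{\varepsilon\in\mathcal{E}'}S_\varepsilon^2\ \ge 1, \qquad \mathcal{E}'=\mathcal{E}\setminus\{(1,\dots,1)\},\quad |\mathcal{E}'|=N.$$
Let $X=S_\varepsilon^2$ with $\varepsilon$ uniform on $\mathcal{E}'$, and let $\mu=\EE X$. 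Averaging over all of $\mathcal{E}$ gives $\EE S^2=\sigma^2$, so
$$\mu=\frac{2^{K-1}\sigma^2-\Lambda^2}{N}\le \frac{\sigma^2}{1-2^{1-K}}.$$
Plain AM--GM already yields \eqref{equation main bound} without the factor $e^{N/K}$. That factor will come from the inequality
$$\prod_{\mathcal{E}'}S_\varepsilon^2=\exp\bigl(N\,\EE\log X\bigr),$$
together with a strict gap $\EE\log(X/\mu)\le -2/K$.

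\textbf{Step 2: the trivial case.} Since $|m_n|\ge1$ and $a_n\ge1$, we have $\sigma^2\ge K$. Therefore the right side of \eqref{equation main bound} is at most $e^{N/K}K^{-N/2}<1$ for $K\ge 8$, and so the theorem holds whenever $\Lambda\ge 1$. From now on assume $\Lambda<1$. This forces the weights to be ``spread out'': the largest weight $w_{\max}$ is at most $\sum_{n\ne n_0}w_n+1$. I will use this (and possibly a further subdivision according to whether $\sum w_n^4$ is close to $\sigma^4$) to guarantee $\sum_n w_n^4\le \theta\sigma^4$ for some $\theta<1$ bounded away from $1$.

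\textbf{Step 3: the quantitative Jensen step.} Put $t=X/\mu-1>-1$, so that $\EE t=0$ on $\mathcal{E}'$. Use $\log(1+t)\le t-\tfrac{t^2}{2}+\tfrac{t^3}{3}$, valid for $t>-1$, to get
$$\EE\log(X/\mu)\le -\tfrac12\EE t^2+\tfrac13\EE t^3.$$
The moments over $\mathcal{E}'$ differ from the full Rademacher moments only by the single excluded term $\Lambda^2<1$, which is negligible relative to $2^{K-1}$. The full moments are explicit:
\begin{align*}
\EE S^4&=3\sigma^4-2\sum w_n^4,\\
\EE S^6&=15\sigma^6-30\sigma^2\sum w_n^4+16\sum w_n^6 .
\end{align*}
So $\EE t^2\approx 2-2\sum w_n^4/\sigma^4$, and $\EE t^3$ is an explicit polynomial in the normalized power sums $p_4=\sum w_n^4/\sigma^4$ and $p_6=\sum w_n^6/\sigma^6$. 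The goal is then an elementary inequality of the form
$$\tfrac12\EE t^2-\tfrac13\EE t^3\ \ge\ \tfrac{2}{K},$$
over the admissible region for $(p_4,p_6)$, namely $p_6\le p_4^{3/2}$, $p_4\ge 1/K$, and $p_4\le\theta$ from Step 2. Granting this, we get $\prod_{\mathcal{E}'}S_\varepsilon^2\le \mu^N e^{-2N/K}$, which gives \eqref{equation main bound}. The ``in particular'' part follows from $\sigma^2\le K\max_n w_n^2$ and from the expansion $N\log(1-2^{1-K})=-\tfrac12+O(2^{-K})$, together with the concavity bound $C_K>e^{-1/2}$.

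\textbf{Main obstacle.} I expect the hardest step to be the cubic term. The third moment $\EE t^3$ is positive and of the same order as $\EE t^2$, so the inequality in Step 3 holds only with a margin, and only when the weights are not concentrated. Making this rigorous needs two things. First, a careful optimization over $(p_4,p_6)$, which is presumably where the hypothesis $K\ge8$ enters. Second, a sharp use of $\Lambda<1$ to exclude the concentrated regime. If the cubic Taylor bound proves too lossy, I would instead use a bound of the form $\log(1+t)\le t-c\,t^2/(1+t)$, or split according to the size of $p_4$.
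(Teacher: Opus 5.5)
Your proposal has a genuine gap when one coefficient dominates, and Step 2 does not rule that regime out. In what follows, real weights $x_n$ replace $m_n\sqrt{a_n}$; that is all your Step 3 uses. The condition $\Lambda<1$, or even $\Lambda$ arbitrarily small, still allows $p_4=1-\frac2K+O(K^{-2})$. This is exactly where the factor $e^{(2^{K-1}-1)/K}$ is sharp (Remark \ref{remark sharpness}).

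Take $x=(K-1+\eta,-1,\dots,-1)$, so that $|\sum_n x_n|=\eta$. As $\eta\to0$, and up to exponentially small terms, $\EE't^2=\frac{2(2K-3)}{K(K-1)}$ and $\EE't^3=\frac{8(3K-5)(K-2)}{K^2(K-1)^2}$. Hence $\frac12\EE't^2-\frac13\EE't^3=\frac2K-\frac9{K^2}+O(K^{-3})$, about $0.13$ at $K=8$ against the required $0.25$. The true value is $-\EE'\log Y=\frac2K+\frac3{K^2}+O(K^{-3})$, about $0.29$ at $K=8$.

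A better elementary inequality does not fix this. A positive two-point law with mean $1$ and the same $\EE'Y^2,\EE'Y^3$ has $\EE\log Y\approx-0.19$ at $K=8$, and $-\frac2K+\frac5{K^2}+O(K^{-3})$ in general. So no bound that sees only the fourth and sixth moments of the signed sums can give the gain $2/K$ here.

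There are two smaller problems. First, the cubic Taylor bound also fails for equal weights: ignoring the excluded term, $\frac12\EE t^2-\frac13\EE t^3=-\frac53+\frac7K-\frac{16}{3K^2}<0$. Second, your fallback $\log(1+t)\le t-c\,t^2/(1+t)$ is false as $t\to-1^+$ for every $c>0$. It must be written as $\log u\le u-1-c\frac{(u-1)^2}{u+1}$ with $u=1+t$.

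The missing idea is the paper's split according to whether $\max_n w_n^2\le\sigma^2/2$.

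In the spread case, your corrected fallback is Lemma \ref{lemma geometric mean}. The condition $\sum_n w_n^6\le\frac{\sigma^2}{2}\sum_n w_n^4$ gives $\EE'X^3\le 11\sigma^2\,\EE'X^2-18\sigma^6$, hence a constant gain $\EE'\log Y\le-\frac4{15}$.

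In the concentrated case, the paper does not apply Jensen to the whole average. Let $x_j$ be a largest coordinate and $Z=\sum_{n\ne j}\varepsilon_n x_n$. The paper pairs the two signs of $x_j$, so that $\log|Z+x_j|+\log|Z-x_j|=\log|x_j^2-Z^2|$ and the terms odd in $Z$ cancel exactly. It then applies AM--GM to the $(x_j^2-Z^2)^2$ over the non-extreme sign patterns. Writing $\tau=\EE Z^2/\sigma^2$, the fourth-moment formula bounds their mean by $\sigma^4\bigl(1-4\tau+(6-\frac{2}{K-1})\tau^2\bigr)$. The two unpaired patterns cost only an exponentially small amount. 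The resulting gain in $\EE'\log|S_\varepsilon|$ is about $\tau+\frac{\tau^2}{2}$. It exceeds $1/K$ only because $\tau>1/(K+2^{3-K})$, which comes from $|x_j|\le|\sum_n x_n|+\sqrt{K-1}\sqrt{\EE Z^2}$ and needs $|\sum_n x_n|$ very small.

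For the same reason your threshold $\Lambda<1$ is too weak. Take $x=(K-\delta,-1,\dots,-1)$ with small $\delta>0$. All signed sums are nonzero and $|\sum_n x_n|=1-\delta<1$, yet $\EE'\log(X/\mu)\approx-0.215>-\frac2K$ at $K=8$, and it equals $-\frac2K+\frac1{K^2}+O(K^{-3})$ in general. So the inequality your Step 3 aims at is false on your admissible region.

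The paper instead treats $\Lambda\ge 2^{1-K}$ as the trivial case. This costs nothing, since the right side of \eqref{equation main bound} is below $(K-2)^{-(2^{K-1}-1)/2}<2^{1-K}$.
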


If $K=1$, then $\Lambda\geq 1$. For $2\leq K\leq7$, the bound without the factor $e^{(2^{K-1}-1)/K}$ in \eqref{equation main bound} remains valid; see Proposition \ref{old thm}. The exponent in the factor $e^{(2^{K-1}-1)/K}$ is asymptotically optimal for our method; see Remark \ref{remark sharpness}.

In the case of
$$ \bigg|\sum_{n\leq M}\sqrt{a_n}-\sum_{M<n\leq K}\sqrt{a_n}\bigg|, $$
where $1\leq M<K$ and all $a_n\leq N$, Dubickas \cite[Theorem 1]{Dubickas_square_roots} proved that, whenever the two sums are distinct,
$$ \bigg|\sum_{n\leq M}\sqrt{a_n}-\sum_{M<n\leq K}\sqrt{a_n}\bigg|\geq \frac{1}{(K^2 N)^{2^{K-2}-1/2}}. $$
This estimate does not require the square-free parts of the $a_n$ to be distinct. Under the independence hypothesis of Theorem \ref{theorem principal}, we obtain the following improvement on the $K$-dependence.

\begin{corollary}
Under the hypotheses on $K$ and $a_1,\ldots,a_K$ of Theorem \ref{theorem principal}, if for all $n\leq K$ we have $a_n\leq N$, then, for any $1\leq M\leq K$ we have that
$$ \bigg|\sum_{n\leq M}\sqrt{a_n}-\sum_{M<n\leq K}\sqrt{a_n}\bigg|\geq \frac{C_K\, e^{(2^{K-1}-1)/K}}{(KN)^{2^{K-2}-1/2}}. $$
\end{corollary}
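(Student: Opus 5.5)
This is a direct specialization of Theorem \ref{theorem principal}. Take $m_n=1$ for $n\leq M$ and $m_n=-1$ for $M<n\leq K$. All of these are nonzero integers, and the hypotheses on $K$ and the $a_n$ are assumed, so Theorem \ref{theorem principal} applies with
$$\Lambda=\bigg|\sum_{n\leq M}\sqrt{a_n}-\sum_{M<n\leq K}\sqrt{a_n}\bigg|.$$
(When $M=K$ the second sum is empty and all $m_n=1$, which is still admissible.)

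I would use the ``in particular'' form of the theorem. Since $|m_n|=1$ and $a_n\leq N$, we have $\max_{n\leq K}|m_n|\sqrt{a_n}\cdot\sqrt K\leq \sqrt{KN}$. The exponent $-(2^{K-1}-1)$ is negative, so replacing the maximum by this larger quantity only decreases the lower bound. This gives
$$\Lambda\geq C_K\,e^{(2^{K-1}-1)/K}\,(KN)^{-(2^{K-1}-1)/2}.$$
The exponent simplifies as $(2^{K-1}-1)/2=2^{K-2}-1/2$, which is exactly the claimed bound.

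Alternatively, one can start from \eqref{equation main bound} with $\sum_{n\leq K}m_n^2a_n\leq KN$. This yields the same bound with the constant $C_K$ arising from $(1-2^{1-K})^{(2^{K-1}-1)/2}$. There is no real obstacle here; the only points to check are that all $m_n$ are nonzero and that the monotonicity in the base goes in the right direction.
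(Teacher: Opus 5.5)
Your proposal is correct and matches the paper's argument. The paper likewise takes $m_n=\pm1$ and inserts $\sum_{n\leq K}m_n^2a_n\leq KN$ into \eqref{equation main bound}, which is your alternative route; this is equivalent to your use of the ``in particular'' form of Theorem \ref{theorem principal}, since $C_K$ is exactly the factor $(1-2^{1-K})^{(2^{K-1}-1)/2}$.
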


This corollary is an immediate consequence of Theorem \ref{theorem principal} upon noticing that $|m_n|=1$ for all $n\leq K$ and hence $\sum_{n\leq K}m_n^2a_n\leq KN$.

\section{Lemmas}\label{section lemmas}

We begin by revisiting one result of \cite{burnikel_et_al_square_roots}. We provide a short elementary proof for the following result that does not require Galois theory.

\begin{lemma}\label{lemma X}
Let $K\geq 1$ and let $a_1,\ldots,a_K$ be positive integers with distinct
square-free parts. Let $m_1,\ldots,m_K$ be integers, not all zero, and $\varepsilon=(\varepsilon_1,\ldots,\varepsilon_K)\in\{-1,1\}^K$. Then
$$ P=\prod_{\varepsilon\in\{-1,1\}^K}\bigg(\sum_{n\leq K}\varepsilon_n m_n\sqrt{a_n}\bigg)\in\mathbb{Z}\setminus\{0\}. $$
\end{lemma}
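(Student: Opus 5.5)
Two things have to be shown: $P$ is an integer, and $P\neq 0$. Both will be proved by induction on $K$, using only symmetric-function arguments and no Galois theory.

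\emph{Integrality.} We may assume each $a_n$ is square-free, absorbing square factors into $m_n$: write $a_n=c_n^2d_n$ and replace $m_n$ by $m_nc_n$. Set
$$F_K(x_1,\ldots,x_K)=\prod_{\varepsilon}\Big(\sum_{n\leq K}\varepsilon_n x_n\Big).$$
This polynomial has integer coefficients and is invariant under each sign change $x_n\mapsto -x_n$, since such a change only permutes the factors. Hence every monomial has even degree in each $x_n$, so $F_K$ is an integer polynomial in $x_1^2,\ldots,x_K^2$. Substituting $x_n=m_n\sqrt{a_n}$ gives $P=F_K$ evaluated at the integers $m_n^2a_n$, so $P\in\mathbb{Z}$.

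\emph{Nonvanishing.} It suffices that every factor $\sum_n\varepsilon_n m_n\sqrt{a_n}$ is nonzero. Since $\varepsilon_nm_n$ are again integers, not all zero, this is exactly the linear independence of $\sqrt{a_1},\ldots,\sqrt{a_K}$ over $\mathbb{Q}$ for distinct square-free $a_n$ (Besicovitch). The paper wants a self-contained elementary proof, so I would prove the following stronger statement by induction on the number of primes involved.

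\emph{Claim.} Let $p_1,\ldots,p_r$ be distinct primes and $L_r=\mathbb{Q}(\sqrt{p_1},\ldots,\sqrt{p_r})$. Then the $2^r$ numbers $\sqrt{d}$, for $d$ ranging over products of subsets of $\{p_1,\ldots,p_r\}$, are linearly independent over $\mathbb{Q}$.

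The key step is $\sqrt{p_{r+1}}\notin L_r$, or more generally $\sqrt{q}\notin L_r$ for any square-free $q>1$ divisible by a prime outside $\{p_1,\ldots,p_r\}$.

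To prove the key step, suppose $\sqrt{q}=\alpha+\beta\sqrt{p_r}$ with $\alpha,\beta\in L_{r-1}$. Squaring gives $q=\alpha^2+\beta^2p_r+2\alpha\beta\sqrt{p_r}$. By the induction hypothesis, $\sqrt{p_r}\notin L_{r-1}$, so $\alpha\beta=0$.
\begin{itemize}
\item If $\beta=0$, then $\sqrt{q}\in L_{r-1}$, contradicting induction.
\item If $\alpha=0$, then $\sqrt{q p_r}=\beta p_r\in L_{r-1}$. The square-free part of $qp_r$ still has a prime outside $\{p_1,\ldots,p_{r-1}\}$ (either the new prime of $q$, or $p_r$ itself), again a contradiction.
\end{itemize}
The induction must therefore be run on this stronger statement: $\sqrt{q}\notin L_r$ whenever the square-free number $q>1$ is not a product of a subset of $\{p_1,\ldots,p_r\}$. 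That is precisely what the case analysis above uses. With this in hand, $[L_{r+1}:L_r]=2$, and the basis $\{\sqrt{d}\}$ doubles at each step, which proves the claim.

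\emph{Main obstacle.} The delicate point is getting the inductive hypothesis strong enough to cover products like $qp_r$. I would phrase it as: for every finite set $S$ of primes and every square-free $q$ not supported on $S$, $\sqrt{q}\notin\mathbb{Q}(\sqrt{p}:p\in S)$. Finally, apply the claim with $\{p_i\}$ the primes dividing $a_1\cdots a_K$. The $a_n$ are distinct subset-products, so each factor of $P$ is nonzero, hence $P\neq 0$.
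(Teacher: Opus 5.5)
Your proof is correct. The integrality part is the paper's argument in slightly different form. The paper makes the evenness in $X_j$ explicit by pairing the factors with $\varepsilon_j=\pm1$ into $\big(\sum_{n\neq j}\varepsilon_nX_n\big)^2-X_j^2$. That is the same fact as your observation that $x_j\mapsto -x_j$ only permutes the factors.

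The real difference is in the nonvanishing. The paper simply cites Besicovitch's theorem. You instead reprove it by the standard induction on the number of primes, strengthening the hypothesis to ``$\sqrt q\notin\mathbb{Q}(\sqrt p:p\in S)$ whenever the square-free $q$ has a prime factor outside $S$''. This keeps the case $\alpha=0$, which gives $\sqrt{qp_r}=\beta p_r$, inside the induction. The case analysis is sound: if $\alpha\beta\neq0$, then $\sqrt{p_r}=(q-\alpha^2-\beta^2p_r)/(2\alpha\beta)\in L_{r-1}$. Once $\sqrt{p_{r+1}}\notin L_r$, the relation $\alpha+\beta\sqrt{p_{r+1}}=0$ forces $\alpha=\beta=0$, so the basis $\{\sqrt d\}$ does double at each step.

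What each route buys:
\begin{itemize}
\item Your route makes the lemma fully self-contained. It uses only that every element of $L_r$ has the form $\alpha+\beta\sqrt{p_r}$ with $\alpha,\beta\in L_{r-1}$, plus the base case $\sqrt q\notin\mathbb{Q}$ for square-free $q>1$. You should state that base case explicitly.
\item The paper's route is much shorter and treats the linear independence as a black box.
\end{itemize}

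Two small remarks. First, the reduction to square-free $a_n$ is needed only for the nonvanishing step. Second, your two phrasings of the strengthened hypothesis (``divisible by a prime outside'' and ``not a subset-product'') are the same condition for square-free $q$. So no strengthening beyond what you first wrote is actually required.
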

\begin{proof}
Consider the polynomial
$$ f(X_1,\ldots,X_K) := \prod_{\varepsilon\in\{-1,1\}^K} \Bigg(\sum_{n\leq K}\varepsilon_n X_n\Bigg) \in \mathbb{Z}[X_1,\ldots,X_K]. $$
Fix an index $j\leq K$. For each choice of the signs $(\varepsilon_n)_{n\neq j}$, the product contains the two factors
$$ \Bigg(\sum_{\substack{n\leq K\\ n\neq j}} \varepsilon_n X_n\Bigg) + X_j \qquad\text{and}\qquad
\Bigg(\sum_{\substack{n\leq K\\ n\neq j}} \varepsilon_n X_n\Bigg) - X_j,$$
corresponding to $\varepsilon_j = 1$ and $\varepsilon_j = -1$. Therefore
$$ f(X_1,\ldots,X_K) = \prod_{\substack{\varepsilon\in\{-1,1\}^K \\ \varepsilon_j=1}} \Bigg(\Bigg(\sum_{\substack{n\leq K\\n\neq j}} \varepsilon_n X_n\Bigg)^2 - X_j^2\Bigg), $$
so $f$ contains only even powers of $X_j$. Since $j$ was arbitrary, every variable occurs to an even power. Thus, there exists $g\in \mathbb{Z}[X_1,\ldots,X_K]$ such that
$$ f(X_1,\ldots,X_K) = g(X_1^2,\ldots,X_K^2). $$
Substituting $X_n = m_n\sqrt{a_n}$, we obtain $P = g(m_1^2 a_1,\ldots, m_K^2 a_K)\in \mathbb{Z}$.

Finally, by Besicovitch's \cite{besicovitch_LI} linear independence of the square roots $(\sqrt{a_n})_{n\leq K}$, we have $\sum_{n\leq K}\varepsilon_n m_n\sqrt{a_n}\neq 0$ for every choice of signs. Hence $P\neq 0$.
\end{proof}

Using only the second moment of a Rademacher sum, we can already obtain the following improvement upon the classical bound.

\begin{proposition}[AM-GM bound]\label{old thm}
Let $K\geq2$ and let $a_1,\ldots,a_K$ be positive integers with distinct square-free parts. For integers $m_1,\ldots,m_K$, not all zero, we have
$$ \Lambda\geq\bigg(\frac{1}{1-2^{1-K}}\sum_{n\leq K}m_n^2a_n\bigg)^{-(2^{K-1}-1)/2}. $$
In particular,
$$ \Lambda\geq C_K\bigg(\max_{n\leq K}|m_n|\sqrt{a_n}\cdot\sqrt K\bigg)^{-(2^{K-1}-1)}, $$
where $C_K=(1-2^{1-K})^{(2^{K-1}-1)/2}$.
\end{proposition}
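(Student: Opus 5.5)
We prove Proposition \ref{old thm} with Lemma \ref{lemma X} and the AM-GM inequality. The factors for $\varepsilon$ and $-\varepsilon$ are negatives of each other. So if $S(\varepsilon):=\sum_{n\leq K}\varepsilon_n m_n\sqrt{a_n}$, then
$$|P| = \prod_{\varepsilon_1=1}|S(\varepsilon)|^2,$$
a product over $2^{K-1}$ sign vectors. We may assume, after possibly flipping all signs, that $\Lambda=|S(\varepsilon^*)|$ with $\varepsilon^*=(1,\ldots,1)$, and that $\varepsilon^*$ lies in the half $\{\varepsilon_1=1\}$. Lemma \ref{lemma X} gives $|P|\geq 1$. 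Hence
$$1\leq \Lambda^2\prod_{\substack{\varepsilon_1=1\\ \varepsilon\neq \varepsilon^*}}S(\varepsilon)^2 .$$

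We apply AM-GM to the remaining $N:=2^{K-1}-1$ factors:
$$\prod_{\varepsilon\neq\varepsilon^*}S(\varepsilon)^2\leq\Big(\frac{1}{N}\sum_{\varepsilon_1=1,\,\varepsilon\neq\varepsilon^*}S(\varepsilon)^2\Big)^{N}.$$
The sum over all $2^{K-1}$ vectors with $\varepsilon_1=1$ equals $2^{K-1}\EE S(\varepsilon)^2$, with $\varepsilon_2,\ldots,\varepsilon_K$ i.i.d.\ Rademacher. Orthogonality, namely $\EE\varepsilon_i\varepsilon_j=0$ for $i\neq j$ (with $\varepsilon_1\equiv1$ this still holds), gives
$$2^{K-1}\EE S(\varepsilon)^2=2^{K-1}\sum_{n\leq K} m_n^2a_n.$$
Subtracting the omitted term $\Lambda^2\geq 0$ and dropping it yields
$$\frac1N\sum S(\varepsilon)^2\leq \frac{2^{K-1}}{2^{K-1}-1}\sum_{n\leq K} m_n^2a_n=\frac{1}{1-2^{1-K}}\sum_{n\leq K} m_n^2a_n.$$
Therefore $1\leq \Lambda^2\big(\frac{1}{1-2^{1-K}}\sum m_n^2a_n\big)^{N}$, and taking square roots gives the first bound, with exponent $-(2^{K-1}-1)/2$.

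For the second bound, we use $\sum m_n^2a_n\leq K\max_n m_n^2a_n$. This gives
$$\Lambda\geq (1-2^{1-K})^{N/2}\big(\max_{n\leq K}|m_n|\sqrt{a_n}\cdot\sqrt K\big)^{-N},$$
which is the claim with $C_K=(1-2^{1-K})^{N/2}$.

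Coefficients $m_n=0$ cause no trouble. Lemma \ref{lemma X} only requires that not all $m_n$ are zero, and the moment identity holds regardless. The only delicate point is the normalisation from pairing $\pm\varepsilon$, which is what turns $2^K-1$ into $2^{K-1}-1$ in the exponent. No step is a real obstacle.
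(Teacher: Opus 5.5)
Your proof is correct and is essentially the paper's argument. The paper isolates the two factors $\varepsilon=\pm(1,\ldots,1)$ and applies AM-GM to the squares of the remaining $2^K-2$ factors, which is the same as your pairing of $\varepsilon$ with $-\varepsilon$ followed by AM-GM on the $2^{K-1}-1$ squares $S(\varepsilon)^2$; both then finish with the same second-moment identity.
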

\begin{proof}
By Lemma \ref{lemma X}, isolating the two factors of absolute value $\Lambda$, we have
\begin{equation}\label{equation product}
1\leq\Lambda^2\prod_{\substack{\varepsilon\in\{-1,1\}^K\\ \varepsilon\neq\pm(1,\ldots,1)}}\bigg|\sum_{n\leq K}\varepsilon_nm_n\sqrt{a_n}\bigg|.
\end{equation}
Applying AM-GM to the squares of the remaining $2^K-2$ factors, we obtain
$$ 1 \leq\Lambda^2\bigg(\frac{1}{2^K-2}\sum_{\varepsilon\in\{-1,1\}^K}\bigg|\sum_{n\leq K}\varepsilon_n m_n\sqrt{a_n}\bigg|^2\bigg)^{2^{K-1}-1}. $$
By interpreting $(\varepsilon_n)_{n\leq K}$ as i.i.d. Rademacher random variables and using that $\EE(\varepsilon_i\varepsilon_j)=0$ for $i\neq j$, it follows that
$$ \frac{1}{2^K}\sum_{\varepsilon\in\{-1,1\}^K}\bigg|\sum_{n\leq K}\varepsilon_nm_n\sqrt{a_n}\bigg|^2
=\EE\bigg|\sum_{n\leq K}\varepsilon_nm_n\sqrt{a_n}\bigg|^2
=\sum_{n\leq K}m_n^2a_n. $$
Hence
$$ 1\leq \Lambda^2\bigg(\frac{1}{1-2^{1-K}}\sum_{n\leq K}m_n^2a_n\bigg)^{2^{K-1}-1}. $$
Rearranging proves the first bound. The conclusion with the maximum follows from $\sum_{n\leq K}m_n^2a_n\leq K(\max_{n\leq K}|m_n|\sqrt{a_n})^2$.
\end{proof}

\section{Moment estimates}

To sharpen the previous estimate, we will use the fourth and sixth moments. The following identities also appear, in normalized form, in \cite[(1) and (15)]{gao_qian}.

\begin{lemma}\label{lemma moments}
Let $K\geq 1$ and let $X = \sum_{n\leq K} \varepsilon_n x_n$, where the $\varepsilon_n$ are i.i.d. Rademacher random variables and the $x_n$ are real coefficients, and write $S=\sum_{n\leq K}x_n^2$. Then
\begin{align*}
\EE(X^2)&=S,\\
\EE(X^4)&=3S^2-2\sum_{n\leq K}x_n^4,\\
\EE(X^6)&=15S^3-30S\sum_{n\leq K}x_n^4+16\sum_{n\leq K}x_n^6.
\end{align*}
\end{lemma}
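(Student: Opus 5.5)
We prove the three identities by expanding the powers of $X$ and using independence together with $\EE(\varepsilon_n^{k})=1$ for $k$ even and $\EE(\varepsilon_n^k)=0$ for $k$ odd. Expanding $X^{2r}=\sum_{n_1,\ldots,n_{2r}} \varepsilon_{n_1}\cdots\varepsilon_{n_{2r}}\,x_{n_1}\cdots x_{n_{2r}}$ and taking expectations, a term survives, with expectation $1$, precisely when every index occurs an even number of times. So $\EE(X^{2r})$ is a sum over the partitions of the $2r$ positions into blocks of even size, with distinct indices attached to distinct blocks. For $r=1$ this immediately gives $\EE(X^2)=\sum_n x_n^2=S$.

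For the fourth moment we classify by the multiset of exponents. The pattern $x_i^4$ occurs once for each $i$. The pattern $x_i^2x_j^2$ with $i\neq j$ occurs $\binom{4}{2}=6$ times for each ordered pair. Hence
$$\EE(X^4)=\sum_i x_i^4+3\sum_{i\neq j}x_i^2x_j^2.$$
Next we write $\sum_{i\neq j}x_i^2x_j^2=S^2-\sum_i x_i^4$ and substitute, which yields $3S^2-2\sum_n x_n^4$.

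For the sixth moment we use the same classification with three patterns. The pattern $x_i^6$ occurs with multiplicity $1$. The pattern $x_i^4x_j^2$ with $i\ne j$ ordered occurs with multiplicity $\binom{6}{2}=15$. The pattern $x_i^2x_j^2x_k^2$ with distinct ordered $i,j,k$ occurs with multiplicity $\frac{6!}{2!2!2!}/3!=15$. Writing $p_k=\sum_n x_n^{2k}$, so that $p_1=S$, we then convert the restricted sums to power sums. The first conversion is
$$\sum_{i\ne j}x_i^4x_j^2=p_2p_1-p_3.$$
The second conversion, over distinct ordered triples, is
$$\sum x_i^2x_j^2x_k^2=p_1^3-3p_1p_2+2p_3,$$
which follows by inclusion–exclusion over coincidences of indices. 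Substituting, we get
$$\EE(X^6)=p_3+15(p_1p_2-p_3)+15\,(p_1^3-3p_1p_2+2p_3)=15S^3-30Sp_2+16p_3,$$
as claimed.

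The only delicate point is the bookkeeping of multiplicities and the inclusion–exclusion for distinct triples, so we sanity-check the final formulas on two cases. With all $x_n$ equal to $1$ and $K=1$, the sixth moment gives $15-30+16=1$, which is correct. With $K=2$, the fourth moment gives $12-4=8$, which matches $\EE(\varepsilon_1+\varepsilon_2)^4=\tfrac12\cdot 16=8$. No earlier result of the paper is needed.
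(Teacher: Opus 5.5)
Your proof is correct and follows essentially the paper's route: expand, keep only the terms in which every index appears an even number of times, and convert the resulting restricted sums into power sums. You sum over ordered tuples with multiplicities $3$ and $15$, where the paper sums over $i<j$ and $i<j<k$ with $\frac{4!}{2!2!}$ and $\frac{6!}{(2!)^3}$; one small slip is that the $\binom{4}{2}=6$ arrangements belong to an unordered pair $\{i,j\}$, i.e.\ $3$ per ordered pair, which is the coefficient you actually and correctly use in your displayed formula.
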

\begin{proof}
For nonnegative integers $r_1,\ldots,r_K$, independence gives
$$ \EE\bigg(\prod_{n\leq K}\varepsilon_n^{r_n}\bigg)
=\prod_{n\leq K}\frac{1+(-1)^{r_n}}{2}
=\begin{cases}
1,&\text{if every $r_n$ is even},\\
0,&\text{otherwise}.
\end{cases} $$
In particular, $\EE(\varepsilon_i\varepsilon_j)=0$ for $i\neq j$ and $\EE(\varepsilon_i^2)=1$, giving $\EE(X^2)=S$. Expanding the fourth and sixth powers and applying the same identity, we obtain
\begin{align*}
\EE(X^4)&=\sum_nx_n^4+\frac{4!}{2!2!}\sum_{i<j}x_i^2x_j^2,\\
\EE(X^6)&=\sum_nx_n^6+\frac{6!}{4!2!}\sum_{i\neq j}x_i^4x_j^2
+\frac{6!}{(2!)^3}\sum_{i<j<k}x_i^2x_j^2x_k^2.
\end{align*}
The fourth moment formula follows from $S^2=\sum_nx_n^4+2\sum_{i<j}x_i^2x_j^2$. For the sixth moment, we use
\begin{align*}
S\sum_nx_n^4&=\sum_nx_n^6+\sum_{i\neq j}x_i^4x_j^2,\\
S^3&=\sum_nx_n^6+3\sum_{i\neq j}x_i^4x_j^2
+6\sum_{i<j<k}x_i^2x_j^2x_k^2.
\end{align*}
Substituting these into the expansion of $\EE(X^6)$ gives the result.
\end{proof}

For a positive random variable $Y$ with $\EE(Y)=1$, the AM-GM inequality gives $\EE(\log Y) \leq \log(\EE(Y))=0$. The following lemma sharpens this bound using the second and third moments of $Y$.

\begin{lemma}\label{lemma geometric mean}
Let $Y > 0$ be a random variable with $\EE(Y)=1$ and $\EE(Y^3)<\infty$. Then
$$ \EE(\log Y)\leq-\frac{8}{9}\frac{(\EE(Y^2)-1)^2}{\EE(Y^3)-\EE(Y^2)}, $$
where the quotient is interpreted as $0$ when $Y=1$ almost surely.
\end{lemma}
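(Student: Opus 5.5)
The plan is to reduce the statement to a single pointwise inequality for the logarithm and then combine it with Cauchy--Schwarz. Write $A:=\EE(Y^2)-1=\EE\big((Y-1)^2\big)$. The first step is to identify the denominator: since $\EE(Y)=1$,
$$ \EE\big((Y-1)^2(Y+1)\big)=\EE\big((Y^2-1)(Y-1)\big)=\EE(Y^3)-\EE(Y^2)-\EE(Y)+1=\EE(Y^3)-\EE(Y^2)=:B. $$
If $Y=1$ almost surely, both sides of the claimed inequality are $0$, so we may assume $A>0$. Then $B\geq \EE\big((Y-1)^2\big)>0$ because $Y+1>1$, so the quotient is well defined.

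The key pointwise claim is that for all $y>0$
$$ \log y\leq (y-1)-\frac{8}{9}\,\frac{(y-1)^2}{y+1}. $$
To prove it, set $\varphi(y):=(y-1)-\log y-\frac{8}{9}\frac{(y-1)^2}{y+1}$, so that $\varphi(1)=0$. A direct differentiation gives
$$ \varphi'(y)=(y-1)\bigg(\frac1y-\frac{8}{9}\frac{y+3}{(y+1)^2}\bigg)=\frac{(y-1)\big(9(y+1)^2-8y(y+3)\big)}{9y(y+1)^2}=\frac{(y-1)(y-3)^2}{9y(y+1)^2}. $$
Hence $\varphi$ is decreasing on $(0,1)$ and increasing on $(1,\infty)$, so $\varphi\geq\varphi(1)=0$. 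This computation is the main obstacle, but it goes through cleanly: the constant $8/9$ is exactly the one that turns the numerator into a perfect square, which also suggests it is the natural sharp choice for this form of bound.

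Next, take expectations in the pointwise bound and use $\EE(Y-1)=0$. This gives
$$ \EE(\log Y)\leq -\frac{8}{9}\,\EE\bigg(\frac{(Y-1)^2}{Y+1}\bigg). $$
All expectations involved are finite since $\EE(Y^3)<\infty$. Finally, apply Cauchy--Schwarz to $(Y-1)^2=\frac{|Y-1|}{\sqrt{Y+1}}\cdot|Y-1|\sqrt{Y+1}$:
$$ A^2=\Big(\EE\big((Y-1)^2\big)\Big)^2\leq \EE\bigg(\frac{(Y-1)^2}{Y+1}\bigg)\,\EE\big((Y-1)^2(Y+1)\big)=\EE\bigg(\frac{(Y-1)^2}{Y+1}\bigg)B. $$
Dividing by $B>0$ gives $\EE\big((Y-1)^2/(Y+1)\big)\geq A^2/B$. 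Combining this with the previous display yields $\EE(\log Y)\leq-\frac{8}{9}A^2/B$, which is the stated inequality.
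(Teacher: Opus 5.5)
Your proposal is correct and follows the paper's proof essentially step for step. Both use the same pointwise inequality $\log u\le u-1-\frac{8}{9}\frac{(u-1)^2}{u+1}$, verified through the same derivative $\frac{(u-1)(u-3)^2}{9u(u+1)^2}$, followed by the same Cauchy--Schwarz splitting $(Y-1)^2=\frac{Y-1}{\sqrt{Y+1}}\cdot (Y-1)\sqrt{Y+1}$ and the identification $\EE\big((Y-1)^2(Y+1)\big)=\EE(Y^3)-\EE(Y^2)$.
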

\begin{proof}
For $u>0$ we have
$$ \log u\leq u-1-\frac{8}{9}\frac{(u-1)^2}{u+1}. $$
Indeed, by subtracting the left-hand side from the right-hand side, this difference vanishes at $u=1$, and has derivative $(u-1)(u-3)^2/(9u(u+1)^2)$. This derivative is nonpositive for $u<1$ and nonnegative for $u>1$, so the difference attains its minimum at $u=1$.

If $Y=1$ almost surely, then the lemma holds with equality, so assume otherwise. Then $\EE((Y-1)^2(Y+1))>0$. Since $\EE(Y-1)=0$, taking expectations and applying Cauchy--Schwarz gives
\begin{align*}
-\EE(\log Y) \geq\frac{8}{9}\,\EE\bigg(\frac{(Y-1)^2}{Y+1}\bigg) &= \frac{8}{9}\,\EE\bigg(\bigg(\frac{Y-1}{\sqrt{Y+1}}\bigg)^2\bigg) \\
&\geq\frac{8}{9} \frac{\left(\EE\left(\dfrac{Y-1}{\sqrt{Y+1}}\cdot(Y-1)\sqrt{Y+1}\right)\right)^2}{\EE\big(((Y-1)\sqrt{Y+1})^2\big)}\\
&=\frac{8}{9} \frac{(\EE((Y-1)^2))^2}{\EE((Y-1)^2(Y+1))}.
\end{align*}
Substituting $\EE((Y-1)^2)=\EE(Y^2)-1$ and $\EE((Y-1)^2(Y+1)) = \EE(Y^3)-\EE(Y^2)$ completes the proof.
\end{proof}

\section{Improving the AM-GM bound}\label{section amgm}

For this section, let $x_1,\ldots,x_K$ be real numbers with $|x_n|\geq1$, write
$$ X=\sum_{n\leq K}\varepsilon_n x_n,\qquad S=\sum_{n\leq K}x_n^2,\qquad L=\bigg|\sum_{n\leq K}x_n\bigg|, $$
where the $\varepsilon_n$ are i.i.d. Rademacher random variables, and assume that $X\neq0$ for every choice of signs. We denote by $\EE'$ the average over the $2^K-2$ choices other than $(1,\ldots,1)$ and $(-1,\ldots,-1)$. Since $X=\pm L$ at the two omitted choices, for $r=1,2,3$ we have that
\begin{equation}\label{equation conditional moments}
\EE'(X^{2r}) = \frac{\EE(X^{2r})-2^{1-K}L^{2r}}{1-2^{1-K}}.
\end{equation}

We consider the cases $\max_nx_n^2\leq S/2$ and $\max_nx_n^2>S/2$ separately. In the first case, the fourth and sixth moment formulas give the estimates needed to apply Lemma \ref{lemma geometric mean} to $X^2/\EE'(X^2)$, with respect to the average $\EE'$. This gives the following improvement upon AM-GM.

\begin{proposition}\label{prop small coordinates}
If $K\geq8$, $L<1$, and $\max_nx_n^2\leq S/2$, then
$$ \EE'(\log|X|) < \frac{1}{2}\log\bigg(\frac{S}{1-2^{1-K}}\bigg)-\frac{1}{8}. $$
\end{proposition}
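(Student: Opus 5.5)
The plan is to apply Lemma~\ref{lemma geometric mean} to $Y=X^2/\EE'(X^2)$, with $\EE'$ playing the role of the expectation. Then
$$\EE'(\log|X|)=\tfrac12\log\EE'(X^2)+\tfrac12\EE'(\log Y).$$
By \eqref{equation conditional moments} with $r=1$, we have $\EE'(X^2)=(S-2^{1-K}L^2)/(1-2^{1-K})\le S/(1-2^{1-K})$. So it suffices to prove $\EE'(\log Y)<-\tfrac14$. By Lemma~\ref{lemma geometric mean}, this reduces to showing
$$R:=\frac{(\EE'(Y^2)-1)^2}{\EE'(Y^3)-\EE'(Y^2)}>\frac{9}{32}.$$

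First I compute the moments of $Y$. Write $p=\sum_n x_n^4/S^2$ and $q=\sum_n x_n^6/S^3$, and set $m=\max_n x_n^2/S\le\tfrac12$. Then $p\le m\le \tfrac12$ and $q\le mp\le p/2$. By Lemma~\ref{lemma moments}, the full-average moments of $X^2/S$ are $1$, $3-2p$ and $15-30p+16q$. I then pass to $\EE'$ using \eqref{equation conditional moments} for $r=1,2,3$. The omitted terms are $2^{1-K}L^{2r}$ with $L<1$, and since $|x_n|\ge1$ we have $S\ge K\ge 8$. Hence every correction is of relative size $O(2^{1-K})$ in the moments of $Y$, with small explicit constants. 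Ignoring these corrections,
$$R\approx\frac{(2-2p)^2}{12-28p+16q}\ \ge\ \frac{4(1-p)^2}{12-20p}=\frac{(1-p)^2}{3-5p},$$
using $q\le p/2$. This lower bound is a function of $p\in[0,\tfrac12]$ alone. Its derivative has the sign of $(1-p)(5p-1)$, so it is minimized at $p=1/5$, where it equals $0.32$. This exceeds $9/32=0.28125$.

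The main obstacle is making the error terms rigorous and checking that they fit into the margin $0.32-0.28125$. I would bound them crudely, which is enough since $2^{1-K}\le 2^{-7}$:
\begin{itemize}
\item $\EE'(Y^2)-1\ge(2-2p)-c\,2^{-K}$, together with the lower bound $1-p\ge\tfrac12$ from $p\le\tfrac12$;
\item $\EE'(Y^3)-\EE'(Y^2)\le(12-28p+16q)+c'\,2^{-K}$, where the denominator is at least about $2$ on the feasible region.
\end{itemize}
Here $c,c'$ are absolute constants coming from $(1-2^{1-K})^{-1}$, from $L^{2r}/S^r\le 8^{-r}$, and from $1/\EE'(X^2)\le (1+O(2^{-K}))/S$. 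For $K\ge8$ these perturb $R$ by well under $0.03$, which yields $R>9/32$ and hence the strict inequality claimed in Proposition~\ref{prop small coordinates}. If the crude constants turn out too loose near $p=1/5$, I would instead keep $q$ as a variable, minimize the exact perturbed expression over the region $\{q\le p/2,\ p\le\tfrac12\}$, and use that it is monotone in $q$.
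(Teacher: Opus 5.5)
Your proposal is correct and is essentially the paper's proof. The paper also applies Lemma~\ref{lemma geometric mean} to $Y=X^2/\EE'(X^2)$ under $\EE'$, and your bound $q\le p/2$ is its inequality $\sum_n x_n^6\le (S/2)\sum_n x_n^4$, which in the full average reads $\EE\big((X^2/S)^3\big)\le 11\,\EE\big((X^2/S)^2\big)-18$. Your minimization over $p$ (value $0.32$ at $p=1/5$) is the same computation as the paper's completing of the square in $\EE'(Y^2)-1$.

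The only difference is how the $\EE'$ corrections are handled. The paper transfers $\EE'(X^6)\le 11S\,\EE'(X^4)-18S^3$ exactly, using $L<1\le S$. It then only weakens $18$ to $35/2$ after normalizing by $\EE'(X^2)\in[S,S/(1-2^{1-K})]$, which gives the uniform bound $3/10$. Your perturbative route also closes: the corrections lower $\EE'(Y^2)-1$ and raise $\EE'(Y^3)-\EE'(Y^2)$ by only about $3\cdot 2^{1-K}$ each. This reduces the minimum from $0.32$ to roughly $0.31$, which is still above $9/32$.
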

\begin{proof}
By hypothesis we have that $\sum_nx_n^6\leq(S/2)\sum_nx_n^4$. Now, Lemma \ref{lemma moments} gives
\begin{align*}
\EE(X^6)&=15S^3-30S\sum_nx_n^4+16\sum_nx_n^6\\
&\leq 15S^3-22S\sum_n x_n^4
=11S\,\EE(X^4)-18S^3.
\end{align*}
By substituting this last inequality into \eqref{equation conditional moments}, and expressing $\EE X^4$ in terms of $\EE'X^4$, and using that $L^2<1\leq S$ and hence $-L^6+11SL^4-18S^3<-L^6+11S-18S^2\leq -7<0$, we reach at
\begin{equation}\label{moment inequality}
 \EE'(X^6)\leq 11S\,\EE'(X^4)-18S^3. 
\end{equation}
Put $\mu := \EE'(X^2)$ and $Y := X^2/\mu$, so that $\EE'(Y)=1$. By \eqref{equation conditional moments} and the assumption $L< S$, we have that
$$ S\leq\mu\leq\frac{S}{1-2^{1-K}}. $$
Dividing \eqref{moment inequality} by $\mu^3$, we obtain
\begin{align*}
\EE'(Y^3) &\leq 11\frac{S}{\mu}\,\EE'(Y^2)-18\bigg(\frac{S}{\mu}\bigg)^3\\
&\leq 11\,\EE'(Y^2)-18(1-2^{1-K})^3
\leq 11\,\EE'(Y^2)-\frac{35}{2},
\end{align*}
since $K\geq 8$ and $18\,(127/128)^3>35/2$. Moreover,
\begin{align*}
\EE'(Y^3)-\EE'(Y^2) &\leq 10(\EE'(Y^2)-1)-\frac{15}{2}\\
&=\frac{10}{3}(\EE'(Y^2)-1)^2-\frac{10}{3}\bigg(\EE'(Y^2)-\frac{5}{2}\bigg)^2\\
&\leq\frac{10}{3}(\EE'(Y^2)-1)^2.
\end{align*}
Applying Lemma \ref{lemma geometric mean} to the average $\EE'$, it follows that
$$ \EE'(\log Y)\leq-\frac{8}{9}\frac{(\EE'(Y^2)-1)^2}{\EE'(Y^3)-\EE'(Y^2)}\leq-\frac{4}{15}<-\frac{1}{4}. $$
Therefore
\[ \EE'(\log|X|)=\frac{1}{2}\log\mu+\frac{1}{2}\,\EE'(\log Y) < \frac{1}{2}\log\bigg(\frac{S}{1-2^{1-K}}\bigg)-\frac{1}{8}. \qedhere \]
\end{proof}

When $\max_n x_n^2\geq S/2$, we pair the two choices of the sign of a largest coefficient before applying AM-GM. Before doing that, we first establish the precise inequality needed for the estimate.

\begin{lemma}\label{lemma scalar comparison}
If $K\geq 8$ and $1/(K+2^{3-K})\leq t\leq1/2$, then
$$ \log(2+2^{1-K})+\frac{2^{K-1}-2}{4} \log\bigg(1-4t+\bigg(6-\frac2{K-1}\bigg)t^2\bigg) < -\frac{2^{K-1}-1}{K}. $$
\end{lemma}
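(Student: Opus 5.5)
Throughout, write $q(t)=1-4t+\bigl(6-\tfrac{2}{K-1}\bigr)t^2$ and $N=2^{K-1}$. Since the coefficient of $t^2$ is positive, $q$ is a convex quadratic in $t$. On the stated range it is also positive: its minimum value is $1-4/(6-2/(K-1))>0$. As $\log$ is increasing, the left-hand side is a convex function of $q$ composed with an increasing map, so its maximum over $[t_0,1/2]$, with $t_0=1/(K+2^{3-K})$, is attained at an endpoint. The plan is therefore to verify the strict inequality only at $t=1/2$ and at $t=t_0$.

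\emph{The endpoint $t=1/2$.} Here $q(1/2)=\frac{K-2}{2(K-1)}<\frac12$. The left-hand side is then at most
$$\log(2+2^{1-K})-\frac{N-2}{4}\log 2 .$$
The right-hand side is $-(N-1)/K\ge -(N-1)/8$. Since $\frac{\log 2}{4}\approx0.173>\frac18$, the difference grows linearly in $N$. Already at $K=8$ ($N=128$) it holds with room to spare: roughly $0.70-21.8<-15.9$. For larger $K$ the gap only widens, because the left side decreases by about $0.173\,N$ while the right side decreases by at most $N/8$.

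\emph{The endpoint $t=t_0$ (the delicate one).} Put $u=1-q(t_0)=4t_0-\bigl(6-\frac{2}{K-1}\bigr)t_0^2\in(0,1)$. The first-order bound $\log(1-u)\le -u$ is \emph{not} enough. It produces $-(N-2)(t_0-\tfrac32t_0^2+\dots)$, which loses a term of order $N/K^2$ against the target $-N/K$. So I will use the second-order bound
$$\log(1-u)\le -u-\frac{u^2}{2}.$$
Since $u^2/2\approx 8t_0^2$, this gives
$$\log q(t_0)\le -4t_0-2t_0^2+\frac{2t_0^2}{K-1}+O(t_0^3).$$
The $O(t_0^3)$ term should be kept explicit with the right sign: expand $u^2=16t_0^2-8(6-\tfrac2{K-1})t_0^3+\dots$ and retain every term. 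Multiplying by $(N-2)/4$, the left-hand side is at most
$$\log(2+2^{1-K})-(N-2)\Bigl(t_0+\tfrac12 t_0^2-\tfrac{t_0^2}{2(K-1)}-O(t_0^3)\Bigr).$$
Now expand $t_0=\frac1K-\frac{2^{3-K}}{K^2}+\dots$. The main terms give $-N/K$, which matches the target $-(N-1)/K$ up to $O(1/K)$. The gain $\frac{N}{2K^2}$ must then absorb four losses:
\begin{itemize}
\item $\log(2+2^{1-K})\approx\log 2$;
\item the shift $N\cdot 2^{3-K}/K^2=4/K^2$;
\item the terms $2/K$ and $1/K$ coming from $N-2$ versus $N-1$;
\item $\frac{N}{2K^2(K-1)}$ and the cubic remainder.
\end{itemize}

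The main obstacle is that this margin is thin for small $K$. At $K=8$ the gain is $128/128=1$, against losses of about $0.70+0.06+0.38+0.14$, so crude remainder bounds will fail. I would therefore handle $K=8$ (and perhaps $K=9,10$) by direct numerical evaluation of the exact expression at $t_0$. For $K\ge 11$ I would argue that $N/(2K^2)(1-\tfrac{1}{K-1})-O(N/K^3)$ exceeds $\log(2.01)+3/K+4/K^2$, since $2^{K}/K^2$ is increasing and already large there. Together with the bound at $t=1/2$, this completes the proof.
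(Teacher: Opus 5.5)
Your reduction to the two endpoints is fine. So is your check at $t=1/2$: the paper instead notes $f(t_0)\ge 1-4t_0>\tfrac12>f(1/2)$, but a direct check works too. Verifying $K=8$ numerically is also the right call, since the true margin there is only about $0.02$.

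The gap is in the analytic step at $t_0$ for $K\ge 11$. The bound $\log(1-u)\le -u-u^2/2$ discards $u^3/3\approx\tfrac{64}{3}t_0^3$. By contrast, the true cubic coefficient of $-\log q(t)$ is only $\tfrac{64}{3}-4a\approx-\tfrac83$, where $a=6-\tfrac{2}{K-1}$. So the remainder you call $O(N/K^3)$ is really $(N-2)\bigl(at_0^3-\tfrac{a^2}{8}t_0^4\bigr)\approx 6N/K^3$. That is about $12/K$ times your gain $N/(2K^2)$, so it is not lower order for $K$ near $11$. Concretely, at $K=11$ you have $u\approx0.31561$ and
\[
\frac{N-2}{4}\Bigl(u+\frac{u^2}{2}\Bigr)\approx 255.5\times0.36541\approx93.36,
\]
whereas you need more than $\frac{N-1}{K}+\log(2+2^{1-K})\approx93.69$. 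The inequality you plan to prove for all $K\ge11$ is therefore false at $K=11$, even though the lemma itself holds there: the exact value $-\log q(t_0)\approx0.37922$ gives about $96.89$.

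There is also a sign slip. Expanding gives
\[
-u-\tfrac{u^2}{2}=-4t_0-\bigl(2+\tfrac{2}{K-1}\bigr)t_0^2+4at_0^3-\tfrac{a^2}{2}t_0^4,
\]
so the $\tfrac{2t_0^2}{K-1}$ term is a gain, not a loss. Carried with your sign, the bound fails at $K=12$ as well.

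You can close the gap in either of two ways.
\begin{enumerate}
\item Extend the direct numerical check through $K=11$, and make the cubic remainder explicit, with its constant, for larger $K$.
\item Use a sharper lower bound for $-\log$. Simply keeping $u^3/3$ already gives about $96.04$ at $K=11$.
\end{enumerate}

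The paper takes the second route. It uses $-\log v\ge 2(1-v)/(1+v)$, obtained from Cauchy--Schwarz on $\int_v^1 \mathrm{d}u/u$. This expands as $u+u^2/2+u^3/4+\cdots$, so it keeps most of the cubic term. After dropping the $-\tfrac{2}{K-1}$ in $q$, the paper gets $-\tfrac14\log f(t_0)>1/(K+2^{3-K}-\tfrac15)\ge 1/K+1/(6K^2)$ for $K\ge10$. This reduces the claim to $(2^{K-1}-2)/(6K^2)\ge\log(2+2^{1-K})+1/K$, and only $K=8,9$ are left to computation.
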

\begin{proof}
Put $b=1/(K+2^{3-K})$ and
$$ f(t)=1-4t+\bigg(6-\frac2{K-1}\bigg)t^2. $$
Since $f''(t)>0$, the maximum of $f$ on $[b,1/2]$ occurs at an endpoint. Moreover,
$$ f(b) \geq 1-4b > \frac{1}{2} > f(1/2), $$
so to prove the inequality it suffices to consider $t=b$.

Suppose first that $K\geq10$. By Cauchy--Schwarz, for $0<v<1$,
$$ -\log v=\int_v^1\frac{\mathrm{d}u}{u} \geq\frac{\big(\int_v^1 \mathrm{d}u\big)^2}{\int_v^1 u\,\mathrm{d}u} = \frac{2(1-v)}{1+v}. $$
Since $0 < f(b) \leq 1-4b+6b^2<1$, this gives
\begin{align*}
 -\frac{1}{4}\log f(b) \geq -\frac{1}{4}\log(1-4b+6b^2) &\geq \frac{b(2-3b)}{2-4b+6b^2} \\
 &> \frac{b}{1-b/5},
\end{align*}
where the last inequality holds because for $b\leq1/10<1/9$ we have that
$$(2-3b)\bigg(1-\frac{b}{5}\bigg)>2-4b+6b^2.$$
Thus, as $2^{3-K} < 1/30$,
$$ -\frac{1}{4}\log f(b) > \frac{1}{K+2^{3-K}-1/5} > \frac{1}{K-1/6} \geq\frac{1}{K}+\frac1{6K^2}. $$
On the other hand, as $\log(2 + 1/512) < 3/4$,
$$ \log(2+2^{1-K})+\frac{1}{K} < \frac{3}{4}+\frac{1}{10} = \frac{17}{20} \leq\frac{2^{K-1}-2}{6K^2}. $$
The last expression equals $17/20$ at $K=10$, and is increasing for $K\geq 10$. Combining these two estimates proves the inequality.

For $K=8,9$, respectively, it remains to check
\begin{align*}
-\frac{63}{2}\log f(32/257) &>\frac{127}{8}+\log(257/128),\\
-\frac{127}{2}\log f(64/577) &>\frac{85}{3}+\log(513/256).
\end{align*}
Here $f$ is evaluated with the corresponding value of $K$. Both inequalities can be checked computationally.
\end{proof}

We can now prove the estimate for $\max_n x_n^2\geq S/2$.

\begin{proposition}\label{prop large coordinate}
If $K\geq 8$, $L<2^{1-K}$, and $\max_nx_n^2\geq S/2$, then
$$ \EE'(\log|X|) < \frac{1}{2}\log\bigg(\frac{S}{1-2^{1-K}}\bigg)-\frac{1}{K}. $$
\end{proposition}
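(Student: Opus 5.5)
The plan is to exploit the dominant coordinate by pairing sign choices before applying AM-GM, and to reduce everything to Lemma \ref{lemma scalar comparison}. Fix an index $j$ with $x_j^2=\max_n x_n^2\geq S/2$. Put $R=S-x_j^2=\sum_{n\neq j}x_n^2$ and $t=R/S\in[0,1/2]$. Write $Z=\sum_{n\neq j}\varepsilon_nx_n$, so that $X=Z+\varepsilon_jx_j$.

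\textbf{Pairing.} For each choice of $(\varepsilon_n)_{n\neq j}$, pair the two sign choices of $\varepsilon_j$. Their product is $|Z^2-x_j^2|$. The two excluded configurations $\pm(1,\ldots,1)$ lie in the pairs where $Z=\pm Z_0$, with $Z_0=\sum_{n\neq j}x_n$. The partners of the excluded configurations are $\pm(Z_0-x_j)$. Since $|Z_0+x_j|=L<2^{1-K}$, we have $|Z_0-x_j|\leq 2|x_j|+L\leq(2+2^{1-K})|x_j|$, using $|x_j|\geq1$. Hence
\[
(2^K-2)\,\EE'(\log|X|)=\sum_{Z\neq\pm Z_0}\log|x_j^2-Z^2|+2\log|Z_0-x_j|,
\]
where the sum runs over the $2^{K-1}-2$ remaining sign patterns of $(\varepsilon_n)_{n\neq j}$. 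The last term is at most $\log S+2\log(2+2^{1-K})$, since $x_j^2\leq S$.

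\textbf{AM-GM on the pairs.} Apply AM-GM to the squares $(x_j^2-Z^2)^2$. The full average over all $2^{K-1}$ patterns is
\[
x_j^4-2x_j^2\,\EE Z^2+\EE Z^4=x_j^4-2x_j^2R+3R^2-2\sum_{n\neq j}x_n^4,
\]
by Lemma \ref{lemma moments} applied to $Z$. Cauchy--Schwarz gives $\sum_{n\neq j}x_n^4\geq R^2/(K-1)$, so this average is at most
\[
S^2\Big((1-t)^2-2t(1-t)+\big(3-\tfrac{2}{K-1}\big)t^2\Big)=S^2f(t),
\]
with $f$ the quadratic of Lemma \ref{lemma scalar comparison}. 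Dropping the two nonnegative excluded terms from the sum and renormalizing multiplies this by $2^{K-1}/(2^{K-1}-2)$. Collecting terms gives
\[
(2^K-2)\,\EE'(\log|X|)\leq\frac{2^{K-1}-2}{4}\log\Big(\frac{2^{K-1}}{2^{K-1}-2}S^2f(t)\Big)+\log S+2\log(2+2^{1-K}).
\]
The powers of $S$ total $(2^{K-1}-1)\log S$, exactly matching the main term of the target multiplied by $2^K-2$. After dividing by $2$, the remaining terms are $\log(2+2^{1-K})+\frac{2^{K-1}-2}{4}\log f(t)$ plus the normalization constants. Lemma \ref{lemma scalar comparison} bounds this by $-(2^{K-1}-1)/K$, which is precisely the required saving.

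\textbf{Range of $t$.} We still need $t\geq1/(K+2^{3-K})$. From $|x_j|<\sum_{n\neq j}|x_n|+2^{1-K}\leq\sqrt{(K-1)R}+2^{1-K}$ we get $x_j^2\leq(K-1)R+2^{2-K}\sqrt{(K-1)R}+2^{2-2K}$. Using $R\geq K-1\geq1$ (since every $|x_n|\geq1$), the error terms are at most $2^{3-K}R$, so $S\leq(K+2^{3-K})R$ as needed.

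\textbf{Expected obstacle.} The delicate point is the bookkeeping of the normalization constants. I must check that the factor $\frac{2^{K-1}-2}{4}\log\frac{2^{K-1}}{2^{K-1}-2}$ is at most $\frac{2^{K-1}-1}{2}\log\frac{1}{1-2^{1-K}}$, the amount the target allows on the right. Both are $O(1)$, and using $\log(1+u)\leq u$ on the former against the exact latter should settle it. If that fails, I would instead not drop the excluded pairs and bound their contribution $(x_j^2-Z_0^2)^2\leq(L(2|x_j|+L))^2$ directly.
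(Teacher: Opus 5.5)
Up to the last step, your argument is the paper's proof, and those steps are correct. This covers the pairing of the two signs of $\varepsilon_j$, the bound $(2+2^{1-K})|x_j|$ for the partners of the excluded configurations, AM--GM on the $2^{K-1}-2$ squares $(x_j^2-Z^2)^2$, the fourth moment plus Cauchy--Schwarz giving $S^2f(t)$, and the derivation of $t\geq 1/(K+2^{3-K})$. In your collected display the coefficient of the logarithm should be $\frac{2^{K-1}-2}{2}$, not $\frac{2^{K-1}-2}{4}$; your count of the powers of $S$ and the division by $2$ already use the correct value.

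The gap is exactly the step you flagged. Writing $N=2^{K-1}$, you need
\[
\frac{N-2}{4}\log\frac{N}{N-2}\;\leq\;\frac{N-1}{2}\log\frac{N}{N-1}.
\]
The estimate $\log(1+u)\leq u$ only bounds the left side by $\frac12$. The right side is itself strictly below $\frac12$, since $\log(1+v)<v$ with $v=1/(N-1)$. The two sides are $\frac12-2^{-K}+O(4^{-K})$ and $\frac12-2^{-K-1}+O(4^{-K})$, so no first-order estimate can decide the comparison. You also cannot absorb the loss in Lemma \ref{lemma scalar comparison}, which is stated as a bare strict inequality with no margin.

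Your fallback does not help either. The excluded squares $(x_j^2-Z_0^2)^2=L^2(x_j-Z_0)^2$ are subtracted from $2^{K-1}\,\EE\big((x_j^2-Z^2)^2\big)$, so only a \emph{lower} bound on them could improve the estimate. They carry the factor $L^2$, which can be arbitrarily small. Keeping them inside the AM--GM instead would introduce the term $-2\log L$, which is unbounded.

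The missing ingredient is a second-order comparison. Set $g(u)=u\log u$. The left side above equals $-\frac{N}{4}g(1-2^{2-K})$ and the right side equals $-\frac{N}{2}g(1-2^{1-K})$. So the required inequality is exactly $g(1-2^{1-K})\leq\frac12\big(g(1)+g(1-2^{2-K})\big)$, i.e.\ midpoint convexity of $u\mapsto u\log u$, which is how the paper concludes.

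Equivalently, the two sides are $\frac12\phi(2^{K-2}-1)$ and $\frac12\phi(2^{K-1}-1)$ with $\phi(m)=m\log(1+1/m)$. This $\phi$ is increasing because $\log(1+1/m)>1/(m+1)$. With either fact inserted, your argument is complete.
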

\begin{proof}
Choose $j$ with $|x_j|=\max_n|x_n|$, put $Z = \sum_{n\neq j}\varepsilon_n x_n$, and write $t := \EE(Z^2)/S$. Since $x_j=\sum_nx_n-\sum_{n\neq j}x_n$, the triangle inequality and Cauchy--Schwarz give
$$ |x_j|\leq L+\bigg|\sum_{n\neq j}x_n\bigg|
\leq L+\sqrt{K-1}\sqrt{\EE(Z^2)}. $$
Using $\EE(Z^2)=\sum_{n\neq j}x_n^2\geq K-1$ and $L<2^{1-K}$, we get
$$ \frac{|x_j|}{\sqrt{\EE(Z^2)}}
\leq\sqrt{K-1}+\frac{L}{\sqrt{\EE(Z^2)}}
<\sqrt{K-1}+\frac{2^{1-K}}{\sqrt{K-1}}. $$
Consequently,
\begin{align*}
\frac1t=\frac{S}{\EE(Z^2)}=1+\frac{x_j^2}{\EE(Z^2)}
&<1+\bigg(\sqrt{K-1}+\frac{2^{1-K}}{\sqrt{K-1}}\bigg)^2\\
&=K+2^{2-K}+\frac{2^{2-2K}}{K-1}<K+2^{3-K},
\end{align*}
where the last inequality uses $2^{-K}<K-1$. Also, since $x_j^2\geq S/2$, we have that $t=(S-x_j^2)/S\leq 1/2$. Thus
\begin{equation}\label{equation variance fraction}
\frac1{K+2^{3-K}}<t\leq\frac{1}{2}.
\end{equation}

We now estimate 
$$\EE'(\log|X|)=\frac{1}{2^K-2}\sum_{\substack{(\varepsilon_n)_{n\leq K}\\ \text{not all equal}}} \log|\varepsilon_jx_j+Z|.$$ 
Since $\EE'$ excludes the sign choices $(1,\ldots,1)$ and $(-1,\ldots,-1)$, if all signs in $Z$ are $1$, the only possible choice for $\varepsilon_j$ in the sum above is $-1$; if they are all $-1$, then the only possibility for $\varepsilon_j$ is $+1$. Now we estimate the contribution of $\log |X|$ in this two extreme situations. We first notice that, in both cases,  $|X|=|\sum_{n\leq K} x_n-2x_j|$, and since $L<2^{1-K}$, $|x_j|\leq\sqrt{S}$, and $S\geq1$, this can be bounded by
$$ \bigg|\sum_{n\leq K} x_n-2x_j\bigg|\leq L+2|x_j|\leq(2+2^{1-K})\sqrt{S}. $$
For each of the other $2^{K-1}-2$ choices of signs in $Z$, there is a one-to-one correspondence between these sequences $(\varepsilon_n)_{n}$ with $\varepsilon_j=1$ and with $\varepsilon_j=-1$, and each of them has same probability. By pairing each of these flips in the $j$-th random variable, the factors $|Z+x_j|$ and $|Z-x_j|$ contribute to a factor of $|Z^2-x_j^2|$. Since all summands are nonnegative,
$$ \sum_{\substack{(\varepsilon_n)_{n\neq j}\\ \text{not all equal}}}(x_j^2-Z^2)^2
\leq\sum_{(\varepsilon_n)_{n\neq j}}(x_j^2-Z^2)^2
=2^{K-1}\EE\big((x_j^2-Z^2)^2\big). $$
Applying AM-GM to these squares and taking logarithms gives
\begin{align}
(2^K-2)\,\EE'(\log|X|) &=2\log\bigg|\sum_{n\leq K} x_n-2x_j\bigg| +\frac{1}{2}\sum_{\substack{(\varepsilon_n)_{n\neq j}\\ \text{not all equal}}} \log\big((x_j^2-Z^2)^2\big) \nonumber \\
&\leq 2\log\big((2+2^{1-K})\sqrt{S}\big) +\frac{2^{K-1}-2}{2} \log\bigg(\frac{2^{K-1}\,\EE\big((x_j^2-Z^2)^2\big)}{2^{K-1}-2}\bigg). \label{equation moment bound}
\end{align}

By Lemma \ref{lemma moments}, we have that
$$ \EE\big((x_j^2-Z^2)^2\big) = x_j^4-2x_j^2\,\EE(Z^2)+3\,(\EE(Z^2))^2-2\sum_{n\neq j}x_n^4. $$
As $t = \EE(Z^2)/S$, we have $x_j^2=(1-t)S$. Since $\EE(Z^2)\leq\sqrt{(K-1)\sum_{n\neq j} x_n^4}$ by Cauchy--Schwarz, the previous equality implies that
\begin{align*}
\EE\big((x_j^2-Z^2)^2\big) &\leq x_j^4- 2x_j^2\,\EE(Z^2)+\bigg(3-\frac{2}{K-1}\bigg)\,\EE(Z^2)^2\\
&=S^2\bigg(1-4t+\bigg(6-\frac{2}{K-1}\bigg)t^2\bigg).
\end{align*}
Substituting in \eqref{equation moment bound} and dividing by $2^K-2$, we obtain
\begin{align*}
\EE'(\log|X|) &\leq \frac{1}{2}\log S \\
&+\frac{1}{2^{K-1}-1}\bigg(\log(2+2^{1-K}) +\frac{2^{K-1}-2}{4} \log\bigg(1-4t+\bigg(6-\frac{2}{K-1}\bigg)t^2\bigg)\bigg) \\
&-\frac{2^{K-1}-2}{4(2^{K-1}-1)}\log(1-2^{2-K}).
\end{align*}
Thus, by \eqref{equation variance fraction} and Lemma \ref{lemma scalar comparison},
$$ \EE'(\log|X|)<\frac{1}{2}\log S-\frac{1}{K} -\frac{2^{K-1}-2}{4(2^{K-1}-1)}\log(1-2^{2-K}). $$
Convexity\footnote{If $f''\geq 0$ in $[a,b]$, then $f((a+b)/2) \leq (f(a)+f(b))/2$.} of $u\mapsto u\log u$, applied at the midpoint of $1$ and $1-2^{2-K}$, gives
$$ (1-2^{1-K})\log(1-2^{1-K}) \leq\frac{1}{2}(1-2^{2-K})\log(1-2^{2-K}). $$
Therefore
\[ \EE'(\log|X|) < \frac{1}{2}\log\bigg(\frac{S}{1-2^{1-K}}\bigg)-\frac{1}{K}. \qedhere \]
\end{proof}

\section{Proof of Theorem \ref{theorem principal}}\label{section proof}

Take $x_n=m_n\sqrt{a_n}$ in the results of Section \ref{section amgm}. Then $|x_n|\geq1$, $S=\sum_{n\leq K}m_n^2a_n$, and $L=\Lambda$. By Lemma \ref{lemma X}, $\sum_n \varepsilon_n x_n\neq 0$ for every choice of signs.

If $\Lambda\geq 2^{1-K}$, the theorem follows from $S\geq K\geq8$ and $e^{-2/K}\geq 1-2/K$, since
$$ \bigg(\frac{e^{-2/K}S}{1-2^{1-K}}\bigg)^{-(2^{K-1}-1)/2}< (K-2)^{-(2^{K-1}-1)/2} < 2^{1-K}. $$

Suppose now that $\Lambda<2^{1-K}$. If $\max_nx_n^2\leq S/2$, we apply Proposition \ref{prop small coordinates} and use $1/8\geq1/K$; if $\max_nx_n^2\geq S/2$, we apply Proposition \ref{prop large coordinate}. In either case,
$$ \EE'(\log|X|) < \frac{1}{2}\log\bigg(\frac{S}{1-2^{1-K}}\bigg)-\frac{1}{K}. $$
Since, by Lemma \ref{lemma X}, $P$ is a nonzero integer, taking logarithms gives
\begin{align*}
0\leq\log|P|&=2\log\Lambda + (2^K-2)\,\EE'(\log|X|)\\
&\leq2\log\Lambda + (2^{K-1}-1)\log\bigg(\frac{e^{-2/K}S}{1-2^{1-K}}\bigg).
\end{align*}
Rearranging proves \eqref{equation main bound}, and the conclusion follows from $S\leq K(\max_{n\leq K}|m_n|\sqrt{a_n})^2$.

Finally, applying $\log(1-u)>-u/(1-u)$ at $u=2^{1-K}$ gives $C_K>e^{-1/2}$. The asymptotic follows by expanding the logarithm:
\[ \log C_K=\frac{2^{K-1}-1}{2}\log(1-2^{1-K}) = -\frac{1}{2} + O(2^{-K}). \]
This completes the proof. \hfill$\square$

\section{Remarks}\label{section remarks}

\begin{remark}[Optimality]\label{remark sharpness}
For real coefficients $x_n$ satisfying only the assumptions of Section \ref{section amgm} and $Q := \prod_{\varepsilon\in\{-1,1\}^K}(\sum_{n\leq K}\varepsilon_n x_n)\in\mathbb{Z}\setminus\{0\}$, the factor $e^{(2^{K-1}-1)/K}$ in the lower bound for $L$, as it appears in \eqref{equation main bound}, cannot be replaced by $e^{c(2^{K-1}-1)/K}$ for any fixed $c>1$. To see this, choose $\eta>0$ such that
$$ \eta\prod_{h=1}^{K-1}(2h+\eta)^{\binom{K-1}{h}}=1. $$
The solution exists since the left-hand side increases continuously from $0$ to infinity, and satisfies $0<\eta<2^{-(2^{K-1}-1)}$. Take $(x_1,\ldots,x_K)=(K-1+\eta,-1,\ldots,-1)$. For $0\leq h\leq K-1$, the value $2h+\eta$ occurs $2\binom{K-1}{h}$ times among the absolute values of the signed sums. So $|Q|=1$, $L=\eta<2^{1-K}$, and
\begin{equation}
 \EE'(\log|X|) = \frac{1}{2^{K-1}-1}\sum_{h=1}^{K-1}\binom{K-1}{h}\log(2h) + O(\eta), \label{eq logX}
\end{equation}
where we used that $0\leq\log(2h+\eta)-\log(2h)\leq\eta/2$.

Taylor expansion with Cauchy remainder gives, uniformly for $1\leq h\leq K-1$,
\begin{align*}
\log(2h) = \log(K-1) &+\frac{2h-K+1}{K-1} - \frac{(2h-K+1)^2}{2(K-1)^2} \\
&+\frac{(2h-K+1)^3}{3(K-1)^3} + O\bigg(\frac{(2h-K+1)^4}{h(K-1)^3}\bigg).
\end{align*}
By the symmetry $h\mapsto K-1-h$, for $r=1,3$ we have 
$$ \frac{1}{2^{K-1}-1}\sum_{h=1}^{K-1}\binom{K-1}{h}\bigg(\frac{2h-K+1}{K-1}\bigg)^r = \frac{1}{2^{K-1}-1} = O(2^{-K}). $$
The sum $\sum_{n\leq K} \varepsilon_n$ equals $2h-K$ for exactly $\binom{K}{h}$ choices of $(\varepsilon_n)_{n\leq K}$. Thus, by the second moment identity
$$ \sum_{h=0}^{K-1}\binom{K-1}{h} (2h-K+1)^2 = 2^{K-1}\,\EE\bigg(\bigg(\sum_{n\leq K-1}\varepsilon_n\bigg)^2\bigg) = 2^{K-1}(K-1), $$
the quadratic term in \eqref{eq logX} averages to $-1/(2(K-1)) + O(2^{-K})$. For the remainder term, using $\binom{K-1}{h}/h=\frac{h+1}{Kh}\binom{K}{h+1}\leq\frac{2}{K}\binom{K}{h+1}$, the fourth moment in Lemma \ref{lemma moments} gives
\begin{align*}
\sum_{h=1}^{K-1}\binom{K-1}{h}\frac{(2h-K+1)^4}{h}
&\leq\frac{2}{K}\sum_{h=0}^{K}\binom{K}{h}(2h-K-1)^4\\
&=\frac{2^{K+1}}{K}\,\EE\bigg(\bigg(\sum_{n\leq K}\varepsilon_n-1\bigg)^4\bigg) \\
&=\frac{2^{K+1}}{K}(3K^2+4K+1)
=O(2^K K).
\end{align*}
Dividing by $(2^{K-1}-1)(K-1)^3$, the averaged remainder contributes $O(K^{-2})$. Therefore, substituting it all in \eqref{eq logX}, we obtain
\begin{equation}
 \EE'(\log|X|) = \log(K-1)-\frac{1}{2(K-1)}+O(K^{-2}). \label{eq logX2}
\end{equation}

On the other hand, since $S=(K-1+\eta)^2+(K-1) = (1+O(\eta/K))\,K(K-1)$ and $\eta < 2^{-(2^{K-1}-1)}$, we have
\begin{align*}
\frac{1}{2}\log\bigg(\frac{S}{1-2^{1-K}}\bigg) &=\log(K-1)+\frac{1}{2}\log\bigg(1+\frac1{K-1}\bigg) -\frac{1}{2}\log(1-2^{1-K})+O(2^{-K})\\
&=\log(K-1)+\frac{1}{2(K-1)}+O(K^{-2}),
\end{align*}
where we used $\log(1+u)=u+O(u^2)$. Moreover, since $|Q|=1$, we have $\log L = -(2^{K-1}-1)\,\EE'(\log|X|)$. Together with \eqref{eq logX2}, this gives
$$ L\,\bigg(\frac{S}{1-2^{1-K}}\bigg)^{(2^{K-1}-1)/2} = \exp\bigg(\frac{2^{K-1}-1}{K}+O\bigg(\frac{2^K}{K^2}\bigg)\bigg). $$
Consequently, the leading term $(2^{K-1}-1)/K$ in the exponent cannot be improved under the assumptions of our method. This does not rule out lower order improvements in the exponent, nor stronger bounds using the special form $x_n=m_n\sqrt{a_n}$.
\end{remark}

\begin{remark}[Higher roots]
The idea of isolating a factor $\Lambda^2$ from the product over $\varepsilon\in\{-1,1\}^K$ is also used by Dubickas \cite{Dubickas_square_roots}. The second moment argument leading to Proposition \ref{old thm}, combined with Dubickas' argument, also applies to nonzero integer linear combinations of $(a_n^{1/\ell})_{n\leq K}$ for any integer $\ell\geq3$. In this case, the Rademacher variables are replaced by i.i.d. random variables uniformly distributed on the $\ell$-th roots of unity; we leave the details to the interested reader. The additional factor of $e^{(2^{K-1}-1)/K}$ obtained here uses the real signs and further moment estimates.
\end{remark}

\noindent\textbf{Acknowledgements.}
The research of Marco Aymone is funded by FAPEMIG grant Universal APQ-00256-23. Samuel Figueredo is funded by the Coordena\c c\~ao de Aperfei\c coamento de Pessoal de N\'ivel Superior - Brazil (CAPES) - finance code 001. Christian T\'afula is funded by the S\~ao Paulo Research Foundation (FAPESP), Brazil, Process No. 2025/15961-3.

\noindent\textbf{LLM assistance.}
ChatGPT, Claude and Gemini were used to obtain the factor of $e^{(2^{K-1}-1)/K}$ in Theorem \ref{theorem principal}, and to develop and revise parts of the proofs. The authors are responsible for the mathematical content and its presentation.

\bigskip
\noindent\small
Departamento de Matem\'atica, Universidade Federal de Minas Gerais, Av. Ant\^onio Carlos, 6627, CEP 31270-901, Belo Horizonte, MG, Brazil.\\ Email addresses: \texttt{aymone.marco@gmail.com} and \texttt{sn.figueredo2@gmail.com}
\medskip

\noindent
School of Mathematics and Statistics, University of New South Wales, Kensingtion NSW, 2033, Australia\\
Email address: \texttt{siddharth.iyer@unsw.edu.au} and \texttt{siddharthiyer@live.com}

\noindent
Instituto de Matem\'atica, Estat\'istica, e Ci\^encia da Computa\c c\~ao, Universidade de S\~ao Paulo, Rua do Mat\~ao, 1010, CEP 05508-090, S\~ao Paulo, SP, Brazil.\\ Email address: \texttt{tafula@ime.usp.br}
\medskip
\end{document}